\providecommand{\newoperator}[3]{%
  \newcommand*{#1}{\mathop{#2}#3}}
\newoperator{\minimize}{\mathrm{minimize}}{\limits}
\newoperator{\optimize}{\mathrm{optimize}}{\limits}
\newoperator{\subjecto}{\mathrm{subject~to}}{\nolimits}
\newcommand{\units}{modules\xspace}
\newcommand{\chainleft}{\ensuremath{C_{\text{L}}}\xspace}
\newcommand{\errorcontributing}{error-contributing\xspace}
\newcommand{\errorchain}{\ensuremath{\mathrm{EC}}\xspace}
\newcommand{\xycarrychain}[2]{\text{#1-#2}\text{ carry chain}\xspace}
\newcommand{\ijcarrychain}{\textit{i-j}\text{ carry chain}\xspace}
\newcommand{\ijcarrychainone}{\textit{i\ensuremath{_{1}}-j\ensuremath{_{1}}}\text{ carry chain}\xspace}
\newcommand{\ijcarrychaintwo}{\textit{i\ensuremath{_{2}}-j\ensuremath{_{2}}}\text{ carry chain}\xspace}
\newcommand{\pqcarrychain}{\textit{p-q}\text{ carry chain}\xspace}
\newcommand{\chains}{\ensuremath{\mathcal{C}}\xspace}
\newcommand{\nbit}{$n$\nobreakdash-\hspace{0pt}bit\xspace}
\newcommand{\nnbit}{$(n+1)$\nobreakdash-\hspace{0pt}bit\xspace}
\newcommand{\ex}{\mathrm{Er}\xspace}
\newcommand{\exavgroman}{\ensuremath{\mathrm{Er}_{\mathrm{avg}}}\xspace}
\newcommand{\chain}{\ensuremath{C}\xspace}
\newcommand{\fullchain}[2]{\ensuremath{C_{#1 #2}}\xspace}
\newcommand{\veca}{\ensuremath{\boldsymbol{a}}\xspace}
\newcommand{\vecb}{\ensuremath{\boldsymbol{b}}\xspace}
\newcommand{\vecs}{\ensuremath{\boldsymbol{s}}\xspace}
\newcommand{\vecc}{\ensuremath{\boldsymbol{c}}\xspace}
\newcommand{\sgn}{\ensuremath{\mathrm{sgn}}\xspace}
\newcommand{\signhat}{\ensuremath{\widehat{\sgn}}\xspace}
\providecommand\@dotsep{5}
\def\listoftodos{{\section*{List of Todos}}\@starttoc{tdo}}
\DeclareDocumentCommand{\Pr}{s m}{
  \operatorname{Pr}%
  \IfBooleanTF{#1}
    {#2}
    {\left[#2\right]}
}%
\providecommand*{\pr}%
{\ensuremath{\mathrm{Pr}}}
\providecommand*{\eu}%
{\ensuremath{\mathrm{e}}}
\providecommand*{\iu}%
{\ensuremath{\mathrm{j}}}
\providecommand{\newoperator}[3]{%
\newcommand*{#1}{\mathop{#2}#3}}
\providecommand*{\diff}%
{\@ifnextchar^{\DIfF}{\DIfF^{}}}
\def\DIfF^#1{%
\mathop{\mathrm{\mathstrut d}}%
\nolimits^{#1}\gobblespace}
\def\gobblespace{%
\futurelet\diffarg\opspace}
\def\opspace{%
\let\DiffSpace\!%
\ifx\diffarg(%
\let\DiffSpace\relax
\else
\ifx\diffarg[%
\let\DiffSpace\relax
\else
\ifx\diffarg\{%
\let\DiffSpace\relax
\fi\fi\fi\DiffSpace}
\theoremstyle{plain}
\newtheorem{lemma}{Lemma}
\newtheorem{theorem}{Theorem}
\theoremstyle{definition}
\newtheorem{definition}{Definition}
\newtheorem{example}{Example}
\theoremstyle{remark}
\newtheorem{assumption}{Assumption}
\newcommand\Section[2][]{\def\@tempa{#1} \protect\boldmath\ifx\@tempa\@empty\section[#2]{#2}\else \section[#1]{#2}\fi\protect\unboldmath}
\newcommand\Subsection[2][]{\def\@tempa{#1} \protect\boldmath\ifx\@tempa\@empty\subsection[#2]{#2}\else \subsection[#1]{#2}\fi\protect\unboldmath}
\newcommand\Subsubsection[2][]{\def\@tempa{#1} \protect\boldmath\ifx\@tempa\@empty\subsubsection[#2]{#2}\else \subsubsection[#1]{#2}\fi\protect\unboldmath}
\newcommand\Paragraph[2][]{\def\@tempa{#1} \protect\boldmath\ifx\@tempa\@empty\paragraph[#2]{#2}\else \paragraph[#1]{#2}\fi\protect\unboldmath}
\newcommand{\eq}[1]{\hyperref[#1]{Eq.~\eqref{#1}}}
\DeclarePairedDelimiter\abs{\lvert}{\rvert}%
\DeclarePairedDelimiter\bigabs{\bigl\lvert}{\bigr\rvert}%
\DeclarePairedDelimiter\Bigabs{\Bigl\lvert}{\Bigr\rvert}%
\DeclarePairedDelimiter\norm{\lVert}{\rVert}%
\let\oldabs\abs
\def\abs{\@ifstar{\oldabs}{\oldabs*}}
\let\oldnorm\norm
\def\norm{\@ifstar{\oldnorm}{\oldnorm*}}
\newcommand{\SAE}{\ensuremath{\mathrm{SAE}}\xspace}
\newcommand{\oldpaper}{\cite{KMMP:ISPLED2011}\xspace}
\newcommand{\truesign}{\ensuremath{\top}\xspace}
\newcommand{\falsesign}{\ensuremath{\bot}\xspace}
\begin{document}
%
\title{Mathematical Modeling\\ of General Inaccurate Adders}
%
%
%

\author{Zvi~M.~Kedem\thanks{Department of Computer Science, Courant Institute of Mathematical Sciences, New York University, New York, NY 10012-1180, USA.} \and Kirthi~Krishna~Muntimadugu\thanks{Austin, TX, USA.}}

%
%

\markboth{Journal of \LaTeX\ Class Files,~Vol.~11, No.~4, December~2012}%
{Shell \MakeLowercase{\textit{et al.}}: Bare Demo of IEEEtran.cls for Journals}
%



\maketitle

\begin{abstract}
  Inaccurate circuits make possible the conservation of limited resources, such as energy.  But effective design of such circuits requires an understanding of resulting tradeoffs between accuracy and design parameters, such as voltages and speed of execution.  Although studies of tradeoffs have been done on specific circuits, the applicability of those studies is narrow.  This paper presents a comprehensive and mathematically rigorous method for analyzing a large class of inaccurate circuits for addition.  Furthermore, it presents new, fast algorithms for the computation of key statistical measures of inaccuracy in such adders, thus helping hardware architects explore the design space with greater confidence.
\end{abstract}


%
\section{Introduction}
\label{sec:introduction}
%
%
%
%

Each day, situations arise in which accuracy is traded off for some other objective.  For example, noticeable changes occur in the quality video that is streamed online as the amount of bandwidth that can be allocated changes.   This is an instance of software-controlled, inaccurate computation. A similar notion can be applied to hardware as well, where the energy that can be allocated to computations is constrained. In this paper, we study inaccurate circuits, specifically circuits for binary additions that may produce incorrect sums for some inputs. Such incorrect sums may be the result of the outputs being read before the computation had a chance to complete (such as approximate adders which violate the critical path, studied e.g. by Kedem et al.~\oldpaper), or they may be the result of the hardware not being designed to be ``fully'' correct (such as pruned adders studied e.g. by Lingamneni et al.~\cite{AvinashENPP11}). In brief, such circuits are designed and operated in a way that trades off accuracy for the usage of a limited resource such as  energy. Particularly because of energy limitations, such circuits are the subject of rapidly growing interest.

An inaccurate ripple-carry adder (RCA) and the expected error under various assignments of voltages to its gates were studied in~\oldpaper, and we refer interested readers to that paper and its references for motivation, additional background, and related work, beyond what is described here and what is required for understanding the results of this paper.  To emphasize that we are considering inaccurate circuits for addition, we will refer to such circuits as \emph{pseudo-adders}.  Of course, the set of correct adders is a subset of the set of pseudo-adders.

The focus of the paper is exclusively on the mathematical properties of the outputs produced by pseudo-adders.  The mathematical properties we study are fundamental to the analysis of important factors in the design of pseudo-adders, such as delays, layouts, voltage assignments, and the energy consumed during the operation.

In order to meaningfully  explore the design space, it is necessary to have metrics for the evaluation of alternative designs.  One such metric would be, of course, the inherent inaccuracy of a pseudo-adder.  A useful measure for this is the expected error, and~\oldpaper developed a fast way for computing it for one type of overclocked RCA.  That paper, however, had limited applicability to other adders because the technique that it developed was targeted to RCAs and crucially relied on specific characteristics of RCAs.

Results presented here go much beyond that paper.  First, we present a general method capable of handling a large class of pseudo-adders. As an example, we show the applicability of our method to the highly parallel Kogge-Stone adder.  The set of statistical measure is also richer.  We provide not only an algorithm for computing the expected error but also algorithms for computing the mean squared error and the maximum error. All of the algorithms are very fast.

To compute the statistical measures listed above, ostensibly one needs to consider the results of computing $a+b$ for all possible pairs of inputs $(a,b)$.  As there are $2^{2n}$ of them for an $n$-bit adder, such an approach would be impossible for other than rather modest values of $n$.  One could optimize somewhat by for example only considering pairs for which $a\leq b $, but savings of this type are not significant.

As was done in~\oldpaper, we reduce the computational effort required by only analyzing the errors in carry chains, thus, in effect having to deal with only $n(n+1)/2$ pairs of inputs while still computing the statistical measures valid for the complete set of input pairs.  In consequence, the number of input pairs that need to be considered is reduced dramatically.  For $n=64$, instead of considering $340.282\times 10^{36}$ pairs, we need to consider only $2.08\times 10^{3}$ pairs, a reduction by 35 orders of magnitude.

For a general background on computer arithmetic, see e.g., Parhami~\cite{Parhami2010}.  To facilitate comparison with~\oldpaper by interested readers, we will use mainly its notation, but we will modify it where it makes the presentation of our results clearer.

The rest of the presentation is organized as follows. We first briefly review the notion of carry chains and formulate our model and the class of pseudo-adders we will analyze.   We then review the results obtained in~\oldpaper, also showing when they are \emph{not} applicable. We then proceed to our more general treatment, formulating and proving the characteristics of the errors that can occur in them. This leads to the formulation of fast algorithms for the computation of the statistical measures of interests. We show an example of an interaction of errors for a pair of inputs in a Kogge-Stone-based pseudo-adder. We conclude by recapitulating our results.

\section{Inputs and carry chains}
\label{sec:inputs-carry-chains}

The fundamental issue in the design of fast adders is the possible presence of carry chains.  Carry chains were studied as early as the 19th century by Babbage~\cite{Babbage1864} for decimal adders and in 1946 by Burks et al.~\cite{vonNeumann1946} for binary adders.

We will consider the addition of two numbers $a$ and $b$ from the range $[0,\dotsc,2^{n}-1]$, for some fixed $n$.  Using standard binary representation, such numbers are in one-to-one correspondence with $n$-bit vectors.  It will, however, be more convenient to consider input vectors of length $n+1$: $\veca = \langle a_{n} a_{n-1}\dotso a_{0}\rangle $ and $\vecb = \langle b_{n} b_{n-1}\dotso b_{0}\rangle$, with $a_{n}=b_{n}=0$.  Given two numbers $a$ and $b$,  two sequences \vecc and \vecs of length $n+1$ are  defined by
\begin{align}
  \label{eq:7}
  &c_{k} =a_{k-1}b_{k-1} \lor a_{k-1} c_{k-1} \lor b_{k-1}c_{k-1}
 \\
\label{eq:72}&s_{k} =a_{k} \oplus b_{k} \oplus c_{k}
\end{align}
for $k=0,1,\dotsc,n$, with $c_{0}=0$.  Thus, \vecs corresponds to the sum $a+b$ and $c_{k}$ is the carry needed to compute $s_{k}$, for $k=1,\dotsc,n$.  Of course, $c_{k}$ is a function of $\langle a_{k-1}a_{k-2} \dotso a_{0} \rangle$ and $\langle b_{k-1}b_{k-2} \dotso b_{0} \rangle$.  In the rest of the paper, when $a$ and $b$ are understood from the context, we may omit them.

We \emph{adapt} a standard definition of a carry chain:
\begin{definition}
  \label{def:newcarrychains}
Given \nbit numbers  $a$ and $b$, there is a \emph{carry chain starting} at $i$ and \emph{ending} at $j$ for $1 \leq i \leq j \leq n$ when
\begin{enumerate}
    \item $a_{i-1}=b_{i-1}=1$
    \item $a_{k} \not = b_{k}$ for $i \leq k \leq j-1$
    \item $a_{j} = b_{j}$.
\end{enumerate}
We will refer to such a carry chain \emph{\ijcarrychain} and also say that it is \emph{from} $i$ \emph{to} $j$.  We will say that the pair $(a,b)$ \emph{generates} this carry chain.  The set of all the carry chains generated by $(a,b)$ will be denoted by $\chains(a,b)$.

Note that our terminology for carry chains is slightly different from the more common one in which a carry chain is a sequence of positions that generate and propagate a carry.  That is, using our notation, in the common definition the carry chain starts at position $i-1$ and it ends in position $j-1$.  We chose our notation, as it is more convenient to focus \emph{not} on the positions from which there are \emph{outgoing} carries of $1$, but on the positions into which there are \emph{incoming} carries of $1$ because those are the positions where errors in the sum may occur.
\end{definition}
Fixing $(a,b)$ and using the notation
\begin{equation}
  \label{eq:13}
\fullchain{i}{j}(a,b) =
\begin{dcases*}
 \truesign &  if there is a carry chain from $i$  to $j$
\\
        \falsesign & otherwise,
    \end{dcases*}
  \end{equation}
  two well known and easily proved~\oldpaper properties are
  \begin{lemma}
    \label{lem:3}
  If $\fullchain{i}{j}(a,b) = \fullchain{p}{q}(a,b) = \truesign$ and $(i,j) \not = (p,q)$, then the intervals $[i,j]$ and $[p,q]$ are \emph{disjoint}, or equivalently, the two carry chains \emph{do not overlap}.
  \end{lemma}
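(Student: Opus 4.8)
The plan is to argue by contradiction: I assume the two carry chains overlap and show that this forces $(i,j) = (p,q)$, contradicting the hypothesis. The guiding intuition is that the three conditions of Definition~\ref{def:newcarrychains} impose very rigid local behavior on the bit pattern. Since $\fullchain{i}{j}(a,b) = \fullchain{p}{q}(a,b) = \truesign$, both chains genuinely exist, so for each chain the bits agree at the position just before its start (both equal $1$, by condition~1), they \emph{disagree} throughout the chain's interior (condition~2), and they agree again at the chain's end (condition~3). Overlap would then place some position simultaneously at an ``agreement'' boundary of one chain and in the ``disagreement'' interior of the other, which is impossible.

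Concretely, suppose for contradiction that $[i,j]$ and $[p,q]$ are not disjoint, and, using the symmetry between the two chains, assume without loss of generality that $i \leq p$. Non-disjointness together with $i \leq p$ gives $p \leq j$. First I would rule out $p > i$: in that case $i \leq p-1 \leq j-1$, so condition~2 applied to the $i$-$j$ chain at position $p-1$ forces $a_{p-1} \neq b_{p-1}$, directly contradicting condition~1 of the $p$-$q$ chain, which asserts $a_{p-1} = b_{p-1} = 1$. Hence $p = i$.

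With the starting points forced to coincide, I would next show $j = q$. Suppose not; again by symmetry assume $j < q$, so $j \leq q-1$. Since $p = i \leq j$, the position $j$ satisfies $p \leq j \leq q-1$, placing it in the interior of the $p$-$q$ chain, where condition~2 gives $a_j \neq b_j$. But condition~3 of the $i$-$j$ chain asserts $a_j = b_j$, a contradiction. Therefore $j = q$, and combined with $p = i$ this yields $(i,j) = (p,q)$, contradicting the hypothesis. The two intervals must therefore be disjoint.

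I do not expect a genuine obstacle here; the argument is a short case analysis driven entirely by the rigidity of Definition~\ref{def:newcarrychains}. The only point deserving care is checking that, whenever condition~2 is invoked, the relevant interior index range is nonempty — but this follows automatically from the ordering and overlap assumptions: in the first step, $p > i$ together with $p \leq j$ forces $j > i$, and in the second step, $j < q$ together with $p \leq j$ forces $p < q$, so the ranges $i \leq p-1 \leq j-1$ and $p \leq j \leq q-1$ are legitimate and the boundary-versus-interior clash is always well defined.
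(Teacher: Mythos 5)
Your proof is correct. Note, however, that the paper itself does not prove this lemma at all: it states Lemma~\ref{lem:3} (together with Lemma~\ref{lem:4}) as ``well known and easily proved'' properties and defers to the cited earlier work on overclocked ripple-carry adders, so there is no in-paper argument to compare against. Your case analysis is the standard argument and is complete: the reduction of overlap (with $i \leq p$) to $p \leq j$, the boundary-versus-interior clash forcing $p = i$ via condition~2 of the $i$-$j$ chain against condition~1 of the $p$-$q$ chain, and then $j = q$ via condition~2 of the $p$-$q$ chain against condition~3 of the $i$-$j$ chain, are exactly the rigidity facts the definition provides; your closing check that the interior ranges invoked are nonempty also covers the degenerate single-position chains $i = j$ or $p = q$, so no case is missed.
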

  \begin{lemma}
    \label{lem:4}
  If $\fullchain{i}{j}(a,b)=\truesign$ and $s=a+b$, then $s_{k}=0$, for $k= i,\dotsc,j-1$, and $s_{j}=1$.
  \end{lemma}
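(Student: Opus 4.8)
The plan is to compute the carry bits along the chain and then read off the sum bits from \eq{eq:72}; the whole argument is a short induction driven by the three conditions in Definition~\ref{def:newcarrychains}. First I would pin down the carry into position~$i$: by condition~1 we have $a_{i-1} = b_{i-1} = 1$, so the first disjunct $a_{i-1}b_{i-1}$ of \eq{eq:7} equals~$1$ and hence $c_i = 1$ independently of $c_{i-1}$.

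Next I would prove by induction that the carry stays at~$1$ across the whole chain, i.e.\ $c_k = 1$ for all $i \leq k \leq j$. The key observation is that for $i \leq k \leq j-1$ condition~2 gives $a_k \neq b_k$, so exactly one of $a_k,b_k$ is~$1$; therefore $a_k b_k = 0$ and $a_k \lor b_k = 1$. Substituting into \eq{eq:7} gives $c_{k+1} = a_k b_k \lor (a_k \lor b_k)c_k = c_k$, so each propagate position simply passes the carry along. Combined with $c_i = 1$, this establishes $c_k = 1$ throughout $[i,j]$.

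With the carries in hand, substituting into \eq{eq:72} finishes everything. For $i \leq k \leq j-1$, condition~2 gives $a_k \oplus b_k = 1$, whence $s_k = (a_k \oplus b_k)\oplus c_k = 1 \oplus 1 = 0$; and for $k=j$, condition~3 gives $a_j = b_j$, so $a_j \oplus b_j = 0$ and $s_j = 0 \oplus c_j = 1$.

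I do not expect a genuine obstacle here, since the statement is really just unwinding the recurrences. The only point needing care is the degenerate chain with $i=j$: then condition~2 is vacuous, the index range $k=i,\dots,j-1$ is empty, and only $s_j=1$ must be verified, which follows immediately from $c_i=1$ and $a_j=b_j$.
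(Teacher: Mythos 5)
Your proof is correct: pinning down $c_i=1$ from condition~1, showing by induction that the propagate positions preserve the carry ($c_{k+1}=(a_k\lor b_k)c_k=c_k$ when $a_k\neq b_k$), and then reading off $s_k=1\oplus 1=0$ for $i\leq k\leq j-1$ and $s_j=0\oplus 1=1$ is exactly the standard argument, including the correct handling of the degenerate case $i=j$. The paper itself gives no proof of this lemma --- it labels it ``well known and easily proved'' and defers to its earlier reference --- and your computation is precisely the argument being alluded to, so there is nothing to reconcile.
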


\section{A model for pseudo-adders}
\label{sec:model-pseudo-adders}

We will model circuits under consideration by directed acyclic graphs (DAGs) with $2n+1$ input (source) vertices and $n+1$ output (sink) vertices.  The two inputs $\boldsymbol{a}$ and $\boldsymbol{b}$ and the input carry $c_{0}$ are supplied at the input vertices at time $t=0$ and the sum $\boldsymbol{s}$ is supposed to be read from the output vertices at some later time $T$.  However, at time $t=T$, the output vertices do not necessarily contain the correct sum $\boldsymbol{s}$ but some vector $\boldsymbol{s'}$, which may differ from $\boldsymbol{s}$, and in general the correct \vecs may not ever be available at the outputs.

We can write an equation analogous to \eqref{eq:72}:
\begin{equation}
  \label{eq:18}
  s_{k}' =a_{k} \oplus b_{k} \oplus c_{k}',
\end{equation}
where $c_{k}'$ for $k=1,2, \dotsc, n$ is a function of $\langle a_{k-1}a_{k-2} \dotso a_{0} \rangle$ and $\langle b_{k-1}b_{k-2} \dotso b_{0} \rangle$, just as $c_{k}$ was.  In the completely general setting, no assumptions can be made about the relation between  $c_{k}$ and $c_{k}'$, so interesting results are not likely to be possible.  We thus restrict our treatment to a useful subset of cases.

\begin{definition}
  \label{def:2}
  A pseudo-adder will be called \emph{conservative} if for all $k$, $c_{k}' \leq c_{k}$, that is whenever $c_{k}=0$ then also $c_{k}'=0$.  Therefore, there will be no spurious carries.
\end{definition}

A situation for which such pseudo-adders appear naturally is one in which the circuit itself is a correct adder, but it is overclocked.  Then, it may be the case that for some $k$ the circuit's ``default'' value for $c_{k}$ is $0$, but for some inputs $c_{k}=1$ and that information is not properly processed by time $t=T$.  In consequence, $c_{k}'=0$ and $s_{k}' \not = s_{k}$.  However, there are conservative pseudo-adders that are not simply an overclocked version of correct adders but are incorrect by their architecture.

We will restrict ourselves to conservative pseudo-adders.  We  make two technical  assumptions, which may seem obvious but they need to be explicitly stated in order to define the error contributed by a carry chain.

\begin{assumption}
  \label{assump:2}
    Because we are dealing with conservative pseudo-adders,  errors can only occur  when for some $k$, $c_{k}=1$ but $c_{k}'=0$.  Therefore, $i \leq k \leq j$, for some \ijcarrychain.,  As $c_{k}=1$ is determined independently of the values of $\langle a_{i-2},a_{i-3} \dotso a_{0} \rangle$ and $\langle b_{i-2}b_{i-3} \dotso b_{0} \rangle$, we assume that $c_{k}'=0$ is also determined independently of those values.  So we assume that the input values in the positions \emph{lower} than $i-1$ cannot influence what happens during the propagation of the carry generated \emph{at} $i-1$.
\end{assumption}

\begin{assumption}
  \label{assump:1}
 Of course the addition $a+b$ is commutative but one can envision conservative pseudo-adder circuits in which the behavior of the computation is not commutative.  Imagine a correct adder design which however has a fabrication error: it ignores the supplied value of $a_{0}$ and always propagates it as $0$.  In such a circuit, $0+1$ will result in $1$ but $1+0$ will result in $0$.  Since we are interested in the properties of a carry chain that are independent of the specific input pair generating it, we will assume that $\boldsymbol{c'}$ is the same for $a+b$ and $b+a$.  \label{bb:1}
\end{assumption}

Consider any carry chain \chain and a pair of inputs $(a,b)$ that generates it.  Define a pair of inputs $(a^{C},b^{C})$ as follows: $a_{0}^{C}=a_{1}^{C}= \dotsm = a_{i-2}^{C}=0$, $a_{i-1}^{C}=a_{i-1}$, $a_{i}^{C}=a_{i}$, {\ldots}, $a_{j-1}^{C}=a_{j-1}$, $a_{j}^{C}=a_{j+1}^{C}= \dotsm = a_{n-1}^{C}=0$, and analogously for $b$.  In effect we have isolate the carry chain by ``zeroing out'' positions irrelevant to its appearance.

We can now state
\begin{definition}
  \label{def:3}
  Let \chain be a carry chain and $(a,b)$ any pair of inputs generating \chain.  \emph{For the associated $(a^{\chain},b^{\chain})$} let $s$ and $s'$ denote respectively the true sum and the sum computed by the pseudo-adder.  Then we define the error of the carry chain as
  \begin{equation}
    \label{eq:21}
    \errorchain(\chain) = s -s'
  \end{equation}
(although more conventionally one might use $s'-s$).
\end{definition}
Note the importance of the two assumptions above.  They allowed us to define the error of the carry chain by considering \emph{any pair} of inputs generating the chain for defining the chain's error.  Our being able to do that made the concept of the error of the carry chain meaningful.

The implications for further development can be stated succinctly:
\begin{lemma}
  \label{lem:5}
  In a conservative pseudo-adder errors are determined by carry chains.
\end{lemma}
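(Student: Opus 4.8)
The plan is to prove the quantitative statement implicit in the lemma: for every input pair $(a,b)$ the total error decomposes over the carry chains as $s - s' = \sum_{\chain \in \chains(a,b)} \errorchain(\chain)$, so that the pseudo-adder's error is completely accounted for by the local errors of its chains. Thus I must show both that errors live only inside chains and that each chain contributes exactly the quantity fixed by Definition~\ref{def:3}.

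First I would localize the errors. Because the pseudo-adder is conservative, a discrepancy at position $k$, i.e. $s_k \neq s_k'$, can arise only when $c_k \neq c_k'$, which by conservativity forces $c_k = 1$ and $c_k' = 0$. But $c_k = 1$ holds exactly when $k$ lies in the interval $[i,j]$ of some \ijcarrychain generated by $(a,b)$: the true carry entering $k$ is created at the generating position $i-1$ and propagated through the positions $i,\dots,k$ where $a$ and $b$ differ. Hence every error position sits inside a chain, and by Lemma~\ref{lem:3} the chains occupy pairwise disjoint intervals, so the error positions split cleanly among them; positions outside every chain have $c_k = c_k' = 0$ and contribute nothing.

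Next I would show that the contribution of one chain \chain from $i$ to $j$ to the full error, namely $\sum_{k=i}^{j}(s_k - s_k')2^{k}$, equals $\errorchain(\chain)$, the error of the isolated pair $(a^{\chain}, b^{\chain})$ that zeroes every position outside $[i-1,j-1]$. Two structural facts force agreement position-by-position on $[i,j]$. First, each computed carry $c_k'$ depends only on the inputs below position $k$, so zeroing the positions $\geq j$ cannot alter any $c_k'$ with $k \leq j$, that is, any carry inside the chain. Second, by Assumption~\ref{assump:2} the value $c_k' = 0$ inside the chain is insensitive to the positions below $i-1$, so zeroing those is equally harmless. Since the bits in positions $i,\dots,j-1$ are copied verbatim, and position $j$ contributes $a_j \oplus b_j = 0$ in both pairs (as $a_j = b_j$ in the original while both bits are $0$ in the isolated pair, so that $s_j - s_j' = c_j - c_j'$ either way), every difference $s_k - s_k'$ matches that of the isolated pair on $[i,j]$. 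As the isolated pair generates \chain and no other chain, its total error is exactly this sum, giving the claimed equality.

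Summing the per-chain equalities over the disjoint $\chain \in \chains(a,b)$ then yields $s - s' = \sum_{\chain} \errorchain(\chain)$, which is the assertion that errors are determined by carry chains. I expect the isolation step of the third paragraph to be the main obstacle: it is the only place where the locality of $c'$ and Assumption~\ref{assump:2} must be combined with care, in order to rule out that either the high-order truncation or the low-order zeroing perturbs a carry inside the chain. Assumption~\ref{assump:1} plays a supporting role throughout, ensuring that $\errorchain(\chain)$ is a genuine attribute of the chain and not of the particular pair used to realize it.
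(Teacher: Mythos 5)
Your proposal is correct, but it proves a stronger, quantitative statement by a genuinely different route than the paper. The paper's own proof of Lemma~\ref{lem:5} is a single qualitative sentence: if two input pairs generate carry chains in the same positions, then the computed sums (more precisely, the errors) coincide, so the error is a function of the set of chains---essentially a direct appeal to conservativity and Assumptions~\ref{assump:2} and~\ref{assump:1}. You instead establish the additive decomposition $s - s' = \sum_{\chain \in \chains(a,b)} \errorchain(\chain)$: you localize every erroneous bit position inside a chain interval using conservativity and the fact that $c_{k}=1$ exactly on such intervals, invoke Lemma~\ref{lem:3} for disjointness, and then match each chain's positional contribution with the isolated error $\errorchain(\chain)$ by combining the model's locality of $c_{k}'$ (dependence only on lower-order inputs, which neutralizes the high-side zeroing) with Assumption~\ref{assump:2} (which neutralizes the low-side zeroing). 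This buys something real: the decomposition you prove is exactly what the paper later uses without explicit justification, both in passing from \eqref{eq:20} to \eqref{eq:31} and at the end of the proof of Lemma~\ref{lem:1}, where $s-s' = \sum_{\chain \in \chains(a,b)} \errorchain(\chain)$ is simply asserted; your argument closes that gap, whereas the paper's one-liner matches the informal character of the lemma but does not by itself yield the summation formula. Note that both arguments inherit the same unstated premise from Definition~\ref{def:3}, namely that $\errorchain(\chain)$ does not depend on which generating pair is isolated.
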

\begin{proof}
  The lemma holds because if two sums $a^{(1)}+b^{{(1)}}$ and $a^{(2)}+b^{{(2)}}$ have carry chains in the same positions, then the \emph{computed} sums for them are the same.
\end{proof}

\section{Reducing from errors in sums to errors in carry chains}
\label{sec:reducing-errors-sums}

\subsection{Errors and carry chains}
\label{sec:errors-carry-chains}

The material in this section contains mostly a general reformulation of the material previously presented  in~\oldpaper for a more restrictive case, but we start with an example of the case that could not be treated there.

\begin{example}
  \label{ex:1}
  \begin{figure}
\centering
\begin{tabular}{r@{\hskip 5em}rrrrrrrrr@{\hskip 0.5em}r@{\hskip 0em}rr}
\toprule
&\multicolumn{8}{c}{\hspace{1em}\textbf{Position}}\\
&&7&6&5&4&3&2&1&0\\
\noalign{\vskip 0.5ex}
\midrule
\noalign{\vskip 0.5ex}
\multicolumn{1}{l}{$\boldsymbol{a}\hspace{1em}\longrightarrow\hspace{1em}$}&$0$&$0$&$1$&$0$&$1$&$0$&$1$&$1$&$0$\\
\multicolumn{1}{l}{$\boldsymbol{b}\hspace{1em}\longrightarrow\hspace{1em}$}&$0$&$0$&$0$&$1$&$1$&$1$&$0$&$1$&$1$\\
\multicolumn{1}{l}{$\boldsymbol{c}\hspace{1em}\longrightarrow\hspace{1em}$}&$0$&$1$&$1$&$1$&$1$&$1$&$1$&$0$&$0$\\
\multicolumn{1}{l}{$\boldsymbol{s}\hspace{1em}\longrightarrow\hspace{1em}$}&$0$&\cellcolor{orange}$1$&\cellcolor{orange}$0$&\cellcolor{orange}$0$&\cellcolor{yellow}$1$&\cellcolor{yellow}$0$&\cellcolor{yellow}$0$&$0$&$1$\\
\noalign{\vskip 0.5ex}
\midrule
\noalign{\vskip 0.5ex}
\multicolumn{1}{l}{$\boldsymbol{a}\hspace{1em}\longrightarrow\hspace{1em}$}&$0$&$0$&$1$&$0$&$1$&$0$&$1$&$1$&$0$\\
\multicolumn{1}{l}{$\boldsymbol{b}\hspace{1em}\longrightarrow\hspace{1em}$}&$0$&$0$&$0$&$1$&$1$&$1$&$0$&$1$&$1$\\
\multicolumn{1}{l}{$\boldsymbol{c'}\hspace{1em}\longrightarrow\hspace{1em}$}&$0$&$1$&$0$&$0$&$0$&$1$&$1$&$0$&$0$\\
\multicolumn{1}{l}{$\boldsymbol{s'}\hspace{1em}\longrightarrow\hspace{1em}$}&$0$&$\cellcolor{orange}1$&\cellcolor{orange}$1$&$\cellcolor{orange}1$&$\cellcolor{yellow}0$&\cellcolor{yellow}$0$&\cellcolor{yellow}$0$&$0$&$1$\\
\noalign{\vskip 1ex}
\bottomrule
\end{tabular}
\caption{Carry and sum bits for an  addition in an $8$-bit adder.  The upper block shows correct addition and the lower block an erroneous one.  Carry chains are highlighted.}
\label{table:example-carries}
\end{figure}

We consider the computation of $a+b$, where $\boldsymbol{a}= \langle 001010110\rangle $ and $\boldsymbol{b}=\langle 000111010\rangle $, generating the \xycarrychain{2}{4} and the \xycarrychain{5}{7}.  The situation is depicted in Fig.~\ref{table:example-carries}.  The top block illustrates the correct computation and the bottom block an incorrect one, which can take place in a pseudo-adder.  The various values are $\vecc = \langle011111100 \rangle$, $\boldsymbol{s}=\langle 010010000\rangle $, $\boldsymbol{c'} = \langle 010001100\rangle$, and $\boldsymbol{s'}= \langle 011100001\rangle$.  Note that as $c'_{k} \leq c_{k}$ for every $k$, this is a conservative pseudo-adder (at least for these inputs).  Note also that the errors contributed by the two carry chains are respectively $-96$ and $+16$, so the errors contributed by carry chains can be both negative and positive.   In Section~\ref{sec:example:-kogge-stone}, we will see exactly how the situation in the lower block can occur in an
 KSA-based pseudo-adder.  This completes the example.
\end{example}

 The algorithm in~\oldpaper was limited in \emph{not accounting} for the possibility of negative errors in carry chains; it considered only overclocked RCAs, where such errors cannot occur, as we discuss further in Section~\ref{sec:case-ripple-carry}.

Returning to a general conservative pseudo-adder,  we examine more closely the difference between $s_{k}$ and $s_{k}'$ for some fixed $k$.  We need to consider several cases:
\begin{enumerate}
    \item
  If $c_{k} \not = c_{k}' $, then $c_{k}=1$, $c_{k}' =0$, and $k$ is such that for some $(i,j)$, $i \leq k \leq j$, we have $\fullchain{i}{j}(a,b)=\top$.  Then there are two cases: \label{aa:1}
  \begin{enumerate}
      \item
    If $k < j$, then $s_{k}=0$, $s_{k}'=1$, and $s_{k}-s_{k}' = -1$.  \label{aa:1a}
      \item
    If $k = j$, then $s_{k}=1$, $s_{k}'=0$, and $s_{k}-s_{k}' = +1$.  \label{aa:1b}
  \end{enumerate}
    \item
  If $c_{k}= c_{k}'$ then $s_{k}-s_{k}'=0$.  \label{aa:2}
\end{enumerate}

We are interested in statistical properties of errors in the computations by pseudo-adders.  A natural  measure to consider is the absolute value of the average error.

The absolute error in the addition $a+b$ is
\begin{equation}
  \label{eq:16}
  \ex(a, b) =  \bigabs {s(a,b)-s'(a,b)}
\end{equation}
and the expectation of the absolute value of the error is
\begin{equation}
  \label{eq:19}
\exavgroman  = \frac{1}{2^{2n}} \smashoperator[r]{\sum_{a,b}} \; \ex( a, b),
\end{equation}
assuming the uniform distribution as we do here.  So we need to compute \emph{the sum of the absolute values of the errors} (\SAE), that is
\begin{equation}
  \label{eq:20}
\SAE =  \smashoperator[r]{\sum_{a,b}}  \ex( a, b),
\end{equation}
a sum of $2^{2n}$ terms, which is an impractical task for other than relatively modest values of $n$.

We will replace \eqref{eq:20} by a certain sum involving errors in the $n(n+1)/2$ carry chains possible in the addition of \nbit numbers and we will use \chain as a variable varying over this set of all the carry chains.  If \chain contributes an error in the sum of a pair of inputs, we refer to \chain as an \emph{\errorcontributing chain} for those inputs.

Because the carry chains generated by a pair of inputs do not overlap, we can rewrite \eqref{eq:20} as
\begin{equation}
  \label{eq:31}
 \SAE = \sum_{a,b}\,  \Bigabs {
  \smashoperator[r]{
      \sum_{\chain \in \chains(a,b)}
}\,
      \errorchain(\chain)  },
\end{equation}
where $\chains(a,b)$ is the set of the carry chains generated by $(a,b)$.

\subsection{The special case of the fine-grained overclocked RCA}
\label{sec:case-ripple-carry}


We now turn our attention to a pseudo-adder obtained from the standard RCA by \emph{fine-grained overclocking}. The modules of the RCA (at whatever level of granularity is technologically feasible) are supplied with different voltages. Even identical modules, such as full adders, may have different voltages supplied in different parts of the circuit. In consequence, identical modules in different parts of the adder might have different delays. If operated at a fixed frequency, such an adder may not always operate correctly since sometimes the output might be read faster than the critical path delay. These voltages, and hence the delays, may be ``distributed'' to optimize some accuracy measure as shown in~\oldpaper.

In specific pseudo adders studied in this Section~\ref{sec:case-ripple-carry}, errors in carry chains have a property that greatly simplifies their treatment.  Intuitively, because for each carry chain, the carry ``ripples'' through sequentially, the error contributed by a carry chain can only result in an output sum that is smaller than the true one.  Specifically,
\begin{theorem}
  \label{thm:3}
  In a pseudo-adder based on the ripple-carry adder
  \begin{equation}
  \label{eq:17}
\errorchain(\chain) \geq 0
\end{equation}
for every carry chain  \chain.
\end{theorem}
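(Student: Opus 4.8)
The plan is to combine the per-position case analysis of $s_{k}-s_{k}'$ already laid out before the theorem with the single structural fact that singles out the ripple-carry adder: inside a carry chain the carry propagates strictly sequentially, so once it is ``lost'' it can never reappear. I would fix a carry chain \chain from $i$ to $j$ and argue on the isolated pair $(a^{\chain},b^{\chain})$ used in Definition~\ref{def:3}. Recall from Definition~\ref{def:newcarrychains} that every interior position $i \leq k \leq j-1$ is a \emph{propagate} position ($a_{k}\neq b_{k}$) carrying no generation, that the whole chain is fed only by the carry generated at position $i-1$, and that by Lemma~\ref{lem:4} the true sum has $s_{k}=0$ for $i\leq k\leq j-1$ and $s_{j}=1$.

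The first and crucial step is to establish a monotone cut-off: there is an index $m$ with $i-1\leq m\leq j$ such that the computed carries satisfy $c_{k}'=1$ for $i\leq k\leq m$ and $c_{k}'=0$ for $m<k\leq j$. The reason is RCA-specific: the carry entering stage $k+1$ is produced from the (possibly not-yet-settled) carry entering stage $k$, and since interior stages only propagate and never generate, a computed carry of $0$ at one position forces $0$ at every later position of the chain. Hence the positions at which the carry \emph{survives} form a \emph{prefix} of the chain, which is exactly the asserted cut-off; here $m=j$ means full, error-free propagation and $m=i-1$ means the carry is dropped at entry.

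With this prefix structure the error becomes a short geometric sum. Writing $\errorchain(\chain)=s-s'=\sum_{k}(s_{k}-s_{k}')\,2^{k}$, the positions $i\leq k\leq m$ (where $c_{k}'=c_{k}=1$) contribute $0$; each lost position $m<k\leq j-1$ gives $s_{k}-s_{k}'=-1$ because $k<j$; and, when $m<j$, the top position $k=j$ gives $s_{j}-s_{j}'=+1$. Therefore, for $m<j$,
\begin{equation*}
  \errorchain(\chain)=2^{j}-\sum_{k=m+1}^{j-1}2^{k}=2^{j}-\bigl(2^{j}-2^{m+1}\bigr)=2^{m+1},
\end{equation*}
while $m=j$ gives $\errorchain(\chain)=0$. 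In every case $\errorchain(\chain)\geq 0$, which is the claim.

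I expect the monotone cut-off to be the only genuine obstacle: the geometric sum is routine and the sign pattern per position is handed to us by the case analysis. The delicate part is arguing rigorously, from the RCA's sequential carry recurrence together with conservativeness (Definition~\ref{def:2}), that the surviving-carry positions really do form a prefix, rather than merely appealing to the intuitive image of the carry ``rippling through.'' This is precisely where the RCA differs from the general conservative case: in Example~\ref{ex:1} a carry can be dropped in the \emph{middle} of a chain (as happens in a parallel prefix adder), so the surviving positions need not be a prefix and $\errorchain(\chain)$ can be negative; the RCA has no mechanism to resurrect a dropped carry, and pinning down that impossibility is the heart of the proof.
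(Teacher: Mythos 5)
Your proof is correct and takes essentially the same route as the paper's: the paper likewise asserts that in an RCA the carry reaches only a prefix of the chain's positions (its index $k$ is your cut-off $m$) and then computes $s-s' = 2^{j} - \sum_{\ell=k+1}^{j-1} 2^{\ell} > 0$, which is exactly your $2^{m+1}$. The only difference is that you spell out the justification of the prefix structure, which the paper simply attributes to ``the structure of the RCA.''
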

\begin{proof}
  Consider some \ijcarrychain \chain for which $\errorchain(\chain) \not = 0$.  The correct values are $s_{i}=s_{i+1}= \dotsm = s_{j-1}=0$ and $s_{j}=1$. Due to the structure of the RCA, $\errorchain(\chain) \not = 0$ because the carry has not yet propagated to positions $k+1, k+2, \dotsc, j$, for some $k+1 \leq j$.  Then $s_{i}'=s_{i+1}'= \dotsm = s_{k}'=0$, $s_{k+1}'=\dotsm = s_{j-1}' = 1$, and $s_{j}'=0$. Therefore, $s-s' = 2^{j} - \sum_{\ell=k+1}^{j-1} 2^{\ell} > 0$.
\end{proof}
 In consequence, of course, $\abs{\,\errorchain(\chain)\,} = \errorchain(\chain) $ and therefore
\begin{equation}
  \label{eq:2}
  \sum_{a,b} \, \Bigabs {
\smashoperator[r] {
\sum_{\chain \in \chains(a,b)}
}
\errorchain(\chain)  }   =  \sum_{a,b}
\smashoperator[r]{
  \sum_{\chain \in \chains(a,b)}
}\,
\errorchain(\chain).
\end{equation}
Let $\nu(\chain)$ denote the number of input pairs $(a,b)$ for which \chain appears in $a+b$.  Then we can change the order of summation and therefore
\begin{equation}
  \label{eq:4}
  \sum_{a,b}
\smashoperator[r]{
  \sum_{\chain \in \chains(a,b)}
}\,
\errorchain(\chain)
=  \sum_{\chain}   \errorchain(\chain) \cdot \nu(\chain).
\end{equation}

Crucially relying on \eqref{eq:17} in studying a specific type of pseudo-adder based on the RCA,~\oldpaper used
\begin{equation}
  \label{eq:6}
\SAE
=
\sum_{\chain}  \errorchain(\chain) \cdot \nu(\chain)
\end{equation}
to compute the absolute value of the average error (although actually the result of interest was phrased there equivalently using probabilities of the errors and not directly using the number of input pairs that generate them).  However, for circuits other than RCAs, \eqref{eq:17} \emph{does not necessarily hold} and in such cases \eqref{eq:6} \emph{will not produce the correct  \SAE}.

\subsection{The case of general conservative pseudo-adders}
\label{sec:case-gener-cons}

To treat general pseudo-adders, a different procedure for computing \eqref{eq:31} must be established, which we do next.  Later we apply it in the example of the KSA for which, indeed \eqref{eq:17} does not hold.

The goal is to dispense with absolute values.  As the development is somewhat technical, we start with an example to provide intuitive understanding behind~\eqref{eq:1}.

\begin{example}
  \label{ex:3}
  The example is not realistic but is well suited for providing intuition.  Assume that we pick some carry chain \chain and we want to know what the contribution of the error in \chain to the sum of absolute errors is.  Its error is $\errorchain(\chain)=-6$, and \chain is generated by only three input pairs each of which generates exactly one additional and distinct error-contributing chain.  Those additional chains $C_{1}$, $C_{2}$, and $C_{3}$ contribute respectively errors of $+13$, $-14$, and $+15$.  Then the total errors for those inputs are $+7$, $-20$, and $+9$.  As the absolute values of these errors are larger than that of $\errorchain(\chain)$, the signs of these errors determine the signs of the total error for their respective input pairs. Hence, we will designate these three carry chains as dominating ones for their respective input pairs.

  We now note that we can obtain the absolute value of the sum of errors for some input by taking the error of the individual chains, multiplying each by the signum of the error of the dominating chain, and adding these terms together.  Indeed, the three signa are $+1$, $-1$, and $+1$, and therefore
  \begin{align}
    \label{eq:9}
    &\abs{-6+13} = (+1)(-6)+(+1)(+13)=+7\\
    &\abs{-6-14} = (-1)(-6)+(-1)(-14)=+20\\
    &\abs{-6+15} = (+1)(-6)+(+1)(+15)=+9.
  \end{align}
  We now compute the contribution of $\errorchain(\chain)$ to \SAE.  It is
  \begin{equation}
    \label{eq:15}
    (+1)(-6)+(-1)(-6)+(+1)(-6) = (-6)(+2-1),
  \end{equation}
  and the SAE is
  \begin{equation}
    \label{eq:23}
    (+1)(+13)+(-1)(-14)+(+1)(+15)+(-6)(+2-1).
  \end{equation}
  Note that $2$ was the number of the input pairs generating \chain in which the dominating chain was positive and $1$ was the number of the input pairs generating \chain in which the dominating chain was negative.  The numbers $+2$ and $-1$ on the right-hand side of~\eqref{eq:15} correspond to $\nu^{+}(\chain)$ and $\nu^{-}(\chain)$ in ~\eqref{eq:1}.  This completes the example.
\end{example}

We now proceed to the general case.  Assume that $\chains(a,b)$ has at least one error-contributing carry chain for input pair $(a,b)$.  We take the \emph{leftmost} (highest position numbers) chain contributing an error, refer to it as the \emph{dominating carry chain} for $(a,b)$, and denote it by $\chainleft(a,b)$.  The following lemma is pivotal to the subsequent development as it states that the sign of the total error for a pair of inputs is determined by the leftmost chain that pair generates.

\begin{lemma}
  \label{lem:1}
\begin{equation}
  \label{eq:22}
\sgn \Biggl(\,
    \smashoperator[r]{
      \sum_{\chain \in \chains(a,b)}
      }\,
\errorchain(\chain) \Biggr)
   =
\sgn \Bigl(\errorchain \bigl( C_{\mathrm{L}}(a,b) \bigr) \Bigr),
\end{equation}
where, as usual, $\sgn$ denotes the signum function.
\end{lemma}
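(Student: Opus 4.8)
The plan is to establish \eqref{eq:22} by a ``most-significant-bit dominates'' argument: I will show that the magnitude of the error of the leftmost error-contributing chain strictly exceeds the combined magnitude of the errors of all the remaining chains generated by $(a,b)$, so that $\chainleft(a,b)$ alone fixes the sign of the sum. Write $[i_{\mathrm{L}},j_{\mathrm{L}}]$ for the interval of $\chainleft(a,b)$. The two estimates I need are a lower bound on $\bigabs{\errorchain(\chainleft(a,b))}$ and an upper bound on the magnitude of the sum of the errors of the other error-contributing chains.

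First I would record the exact arithmetic form of the error of a single \ijcarrychain \chain. By Lemma~\ref{lem:4} the correct sum satisfies $s_{i}=\dots=s_{j-1}=0$ and $s_{j}=1$, and by the enumerated case analysis above an error at a position $k$ occurs only when $c_{k}'=0$, contributing $-2^{k}$ for an interior position $i\le k\le j-1$ and $+2^{j}$ at the endpoint. Hence, with $\delta_{k}\in\{0,1\}$ recording whether position $k$ fails,
\begin{equation*}
  \errorchain(\chain)=\delta_{j}\,2^{j}-\sum_{k=i}^{j-1}\delta_{k}\,2^{k},
\end{equation*}
so the only positive term sits at the endpoint and every term is a distinct power of two from $[i,j]$. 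Applying this to $\chainleft(a,b)$ I split into two cases. If $\delta_{j_{\mathrm{L}}}=1$ then $\errorchain(\chainleft(a,b))\ge 2^{j_{\mathrm{L}}}-\sum_{k=i_{\mathrm{L}}}^{j_{\mathrm{L}}-1}2^{k}=2^{i_{\mathrm{L}}}>0$; if $\delta_{j_{\mathrm{L}}}=0$ then, since the chain is error-contributing, some interior position $k\ge i_{\mathrm{L}}$ fails and $\errorchain(\chainleft(a,b))=-\sum_{k}\delta_{k}2^{k}\le-2^{i_{\mathrm{L}}}<0$. In either case
\begin{equation*}
  \bigabs{\errorchain(\chainleft(a,b))}\ge 2^{i_{\mathrm{L}}}.
\end{equation*}

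Next I would control the remaining chains. By Lemma~\ref{lem:3} the chains generated by $(a,b)$ occupy pairwise disjoint intervals; since $\chainleft(a,b)$ is the leftmost (highest-position) error-contributing chain, no error-contributing chain can lie above it, so every other error-contributing chain has its interval contained in $[0,i_{\mathrm{L}}-1]$. By the displayed formula each such chain contributes a signed sum of distinct powers $2^{k}$ with $k\le i_{\mathrm{L}}-1$, and disjointness guarantees that the positions occurring across all of them are distinct. Consequently the total error of the other chains is a signed combination of distinct powers of two all below $2^{i_{\mathrm{L}}}$, so its magnitude is at most $\sum_{k=0}^{i_{\mathrm{L}}-1}2^{k}=2^{i_{\mathrm{L}}}-1<2^{i_{\mathrm{L}}}\le\bigabs{\errorchain(\chainleft(a,b))}$. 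Since the full sum in \eqref{eq:22} is $\errorchain(\chainleft(a,b))$ plus this strictly smaller quantity, its sign equals $\sgn\bigl(\errorchain(\chainleft(a,b))\bigr)$, proving the lemma.

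The step I expect to be the main obstacle is the confinement claim --- that every non-dominating error-contributing chain lives strictly below position $i_{\mathrm{L}}$ --- because this is exactly where disjointness (Lemma~\ref{lem:3}) and the very definition of ``leftmost'' must be combined to turn a collection of individually sign-indefinite chain errors into a single, cleanly bounded signed sum. Two small points round out the argument: when $\chains(a,b)$ contains a single error-contributing chain the other-chain sum is empty and the claim is immediate, and throughout one uses the standing hypothesis (stated just before the lemma) that at least one error-contributing chain exists, so that $\chainleft(a,b)$ is defined.
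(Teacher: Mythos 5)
Your proof is correct, but it takes a genuinely different route from the paper's. The paper argues at the level of bit vectors: it takes the leftmost position $k$ where $s$ and $s'$ differ, notes that $k$ lies inside $\chainleft(a,b)$, and observes that $k$ is also the leftmost differing position for the isolated-pair sums $s(\chainleft)$ and $s'(\chainleft)$, with matching bit values there; since the sign of a difference of two binary numbers is decided by the most significant differing bit, and $s-s'=\sum_{\chain\in\chains(a,b)}\errorchain(\chain)$, the two signs coincide. You instead work arithmetically: you derive the explicit expansion $\errorchain(\chain)=\delta_{j}2^{j}-\sum_{k=i}^{j-1}\delta_{k}2^{k}$ from the paper's case analysis, deduce the lower bound $\bigabs{\errorchain(\chainleft(a,b))}\ge 2^{i_{\mathrm{L}}}$, confine all other \errorcontributing chains to positions below $i_{\mathrm{L}}$ via Lemma~\ref{lem:3} and the definition of leftmost, and bound their combined contribution by $2^{i_{\mathrm{L}}}-1$ using the triangle inequality. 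Both arguments rest on the same intuition (high positions dominate), but yours proves a quantitatively stronger fact --- the dominating chain's error strictly exceeds in magnitude the total of all the others --- which in particular shows the total error is nonzero whenever $\chainleft(a,b)$ contributes a nonzero error, and it never needs the identity $s-s'=\sum_{\chain}\errorchain(\chain)$ since it manipulates the sum of chain errors directly. The paper's proof is shorter and avoids writing down the error formula, transferring the question to the isolated pair $(a^{\chainleft},b^{\chainleft})$ via Assumptions~\ref{assump:2} and~\ref{assump:1}; your version makes the domination explicit and self-contained, at the cost of the case analysis on $\delta_{j_{\mathrm{L}}}$.
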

\begin{proof}
    The idea behind the proof is that if $k$ is the largest position in which there is an error, it is also the largest position in which there is an error in the positions of the dominating carry chain.

  Let $\boldsymbol{u}$ and $\boldsymbol{v}$ be \nnbit vectors such that for some $k$, $0 \leq k \leq n$, $u_{n} =v_{n}$, $u_{n-1} =v_{n-1}$, \ldots, $u_{k+1}=v_{k+1}$ but $u_{k}=1$ and $v_{k}=0$.  Then, obviously, $u>v$.

  Consider now inputs $a$ and $b$ and the corresponding $s$ and $s'$ such that $s \not = s'$.  Let $k$ be the leftmost position in which $s_{k}\not = s_{k}'$.  Then $\chainleft(a,b)$ is an \ijcarrychain such that $i \leq k \leq j$.  We will write $\chainleft$ for $\chainleft(a,b)$ in this proof.  Recalling a notation from Definition~\ref{def:3}, let $s(\chainleft)$ and $s'(\chainleft)$ denote respectively the correct and the computed sum for the input pair $(a^{\chainleft},b^{\chainleft})$.  Then,  $s(\chainleft) - s'(\chainleft) = \errorchain(\chainleft) $.

  Position $k$ is also the leftmost position in which $\boldsymbol{s}(\chainleft)$ and $\boldsymbol{s'}(\chainleft)$ are different, and furthermore $s_{k}(\chainleft) = s_{k}$ and $s_{k}'(\chainleft) = s_{k}'$.  Therefore $s>s'$ if and only if $s(\chainleft)>s'(\chainleft)$, and as $s-s' = \sum_{\chain \in \chains(a,b)} \errorchain(\chain)$, the lemma is proved.
\end{proof}

As we will see next, the lemma  allows us to replace the absolute sum in \eqref{eq:31} by an appropriate collection of signed (multiplied by $+1$ or $-1$) $\errorchain(\chain)$'s.  Let
\begin{equation}
    \signhat(a,b)=
  \begin{dcases*}
    0& if $\chains(a,b) = \emptyset   $\\
     \sgn \Big(\errorchain \big(\chainleft(a,b) \big) \Big)&otherwise.
   \end{dcases*}
   \label{eq:32}
\end{equation}
Recall that the absolute value of the error contributed by \chain to the computation of $a+b$ is the same for all pairs of inputs $(a,b)$ which generate \chain.  Let
\begin{equation}
  \label{eq:3}
 \nu^{\sigma}(\chain) = \Bigl\lvert \big \{  (a,b) \mid C \in \chains(a,b) \wedge  \signhat(a,b) = \sigma  \big \}  \Bigr\rvert,
\end{equation}
where the outer bars denote the cardinality of the set.  (Note that $\nu^{\sigma}$ is \emph{not} $\nu$ to the power of $\sigma$, but rather either $\nu^{+}$ or $\nu^{-}$.)  The number $\nu^{\sigma}(C)$ is the number of input pairs that generate \chain and in which the sign of the dominating chain is $\sigma$.

We are now ready to handle the absolute values in~\eqref{eq:31} using $\signhat(a,b)$.  The key property follows.  Consider some pair of inputs $(a,b)$ for which there is an error in the pseudo-adder.  By Lemma~\ref{lem:1}, the sign of the total error in $a+b$ is the sign of the corresponding dominating chain, $\chainleft(a,b)$.  Therefore, to obtain the absolute value of the total error, we can multiply the total error by the sign of the dominating chain, that is by $\signhat(a,b)$.  This leads to the following.
\begin{theorem}
  \label{thm:2}
  \begin{equation}
    \label{eq:1}
    \exavgroman = \frac{1}{2^{2n}} \sum_{C}   \errorchain(\chain) \cdot \big ( \nu^{+}(\chain)   -\nu^{-}(\chain) \big).
  \end{equation}
\end{theorem}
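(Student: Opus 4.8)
The plan is to transform the absolute-value sum \eqref{eq:31} into the signed, chain-indexed sum of \eqref{eq:1} in two moves: first replace each absolute value by a signed copy of the inner sum using Lemma~\ref{lem:1}, and then interchange the order of summation so that carry chains, not input pairs, become the outer index.

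First I would discard the pairs $(a,b)$ that produce no error, since they contribute $0$ to $\SAE$. For each remaining pair the total error is nonzero, so a dominating chain $\chainleft(a,b)$ exists and $\signhat(a,b) = \pm 1$. By Lemma~\ref{lem:1} the sign of $\sum_{\chain \in \chains(a,b)} \errorchain(\chain)$ equals $\signhat(a,b)$, so multiplying the inner sum by $\signhat(a,b)$ restores its absolute value:
\[
\Bigabs{\sum_{\chain \in \chains(a,b)} \errorchain(\chain)} = \signhat(a,b) \sum_{\chain \in \chains(a,b)} \errorchain(\chain).
\]
Substituting into \eqref{eq:31} turns $\SAE$ into a double sum over pairs and over the chains they generate, with each term weighted by $\signhat(a,b)$.

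Next I would reindex the double sum by $\chain$ on the outside:
\[
\SAE = \sum_{\chain} \errorchain(\chain) \sum_{(a,b)\,:\,\chain \in \chains(a,b)} \signhat(a,b).
\]
The step I expect to need the most care is evaluating the inner sum cleanly, because of the edge cases hidden in the definition \eqref{eq:32} of $\signhat$. I would argue that only chains with $\errorchain(\chain) \neq 0$ matter, since the others kill their term; and for such a chain every generating pair $(a,b)$ has at least one error-contributing chain, namely $\chain$ itself, hence a well-defined dominating chain and $\signhat(a,b) \in \{+1,-1\}$. Partitioning those pairs by the sign of $\signhat(a,b)$ and reading off the counts from \eqref{eq:3}, the inner sum collapses to exactly $\nu^{+}(\chain) - \nu^{-}(\chain)$.

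Finally I would divide by $2^{2n}$ and use the definitions of $\exavgroman$ in \eqref{eq:19} and of $\SAE$ in \eqref{eq:20} to conclude \eqref{eq:1}. Conceptually this is the sign-aware analogue of the RCA computation \eqref{eq:4}--\eqref{eq:6}: there one could drop the absolute value outright because $\errorchain(\chain) \geq 0$ and weight each chain by the plain count $\nu(\chain)$, whereas here Lemma~\ref{lem:1} forces us to weight by the signed count $\nu^{+}(\chain) - \nu^{-}(\chain)$ instead.
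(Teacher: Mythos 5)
Your proposal is correct and follows essentially the same route as the paper's own proof: use Lemma~\ref{lem:1} (via $\signhat$) to replace each absolute value in \eqref{eq:31} by a signed copy of the inner sum, interchange the order of summation so chains become the outer index, and collapse the inner sum over generating pairs to $\nu^{+}(\chain)-\nu^{-}(\chain)$ using \eqref{eq:3}. Your explicit handling of the edge cases (zero-error pairs and chains with $\errorchain(\chain)=0$) is a slightly more careful rendering of steps the paper leaves implicit, but it is the same argument.
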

\begin{proof}
  We first note that
  \begin{align}
    \label{eq:12}
       \Bigabs {
  \smashoperator[r]{
      \sum_{\chain \in \chains(a,b)}
    }  \errorchain(\chain)
    }
    &= \signhat(a,b) \cdot
\smashoperator[lr] {
  \sum_{\chain \in \chains(a,b)}
}
\errorchain(\chain  )\\
    &=
\smashoperator[lr] {
  \sum_{\chain \in \chains(a,b)}
}
\signhat(a,b) \cdot \errorchain(\chain  ).
  \end{align}

Then, having dispensed with the absolute values, we have
  \begin{align}
    \label{eq:8}
 \sum_{a,b} \, \Bigabs {
  \smashoperator[r]{
      \sum_{\chain \in \chains(a,b)}
}\,
\errorchain(\chain)  } & =
\sum_{a,b} \,
\smashoperator[r]{
  \sum_{\chain \in \chains(a,b)}
  }\,
  \signhat(a,b) \cdot \errorchain(\chain)  \\
   & =
\sum_{C}
\smashoperator[r]{
  \sum_{\{(a,b)\, \mid \, \chain \in \chains(a,b)\}}
  }\,
\signhat(a,b) \cdot \errorchain(\chain)  \\
&=\sum_{C}   \errorchain(\chain) \cdot \big ( \nu^{+}(\chain)   -\nu^{-}(\chain) \big),
\end{align}
and the theorem immediately follows.
\end{proof}

In the simple case of the RCA, $\nu^{-}(\chain)=0$ and we can write $\nu$ for $\nu^{+}$, simplifying the equation and  getting~\eqref{eq:6}.

To compute \eqref{eq:19} fast, it is enough to compute all the terms in \eqref{eq:1} fast.  For every \chain, one needs to compute $\errorchain(\chain)$, and we return to this later, assuming here that we have these values for the pseudo-adder under consideration.

\begin{figure}
\centering
\begin{tabular}{@{\hskip 18pt}l@{\hskip 24pt}l@{\hskip 18pt}}
\toprule
\textbf{Positions}&\textbf{Inputs}\\
\midrule
$0,\dotsc,i-2$&$00$, $01$, $10$, $11$\\
$i-1$&$11$\\
$i,\dotsc,j-1$&$01$, $10$\\
$j$&$00$, $11$\\
$j+1,\dotsc,p-2$&$00$, $01$, $10$, $11$\\
$p-1$&$11$\\
$p,\dotsc,q-1$&$01$, $10$\\
$q$&$00$\\
$q+1,\dotsc,n-1$&$00$, $01$, $10$\\
\bottomrule
\end{tabular}
\caption{Input conditions for  \ijcarrychain and for the corresponding leftmost \pqcarrychain.  The left column lists the positions and the right column lists the pairs of inputs allowed in the positions.  Note that possibly $q=n$ and then the last row does not apply.  The relation between a carry chain and the probability of an input pair producing it was already studied in~\cite{vonNeumann1946}.}
\label{table:carry-chains-conditions}
\end{figure}

\subsection{Computing fast the total effect of a carry chain error}
\label{sec:comp-effect-carry}

We need to compute $\nu^{+}(\chain)$ and $\nu^{-}(\chain)$ where \chain is an error-contributing \ijcarrychain and $1 \leq i \leq j \leq n$.  If \chain is the leftmost error-contributing chain generated by some input pair $(a,b)$, then $\signhat(a,b) = \sgn\big(\errorchain(\chain) \big)$.  This is the simpler case, so we will consider here the case where \chain is not the leftmost error-contributing chain and let \pqcarrychain be the leftmost error-contributing chain generated by $(a,b)$.  For this situation, $n \geq q \geq p > j \geq i \geq 1$ and the conditions in Fig.~\ref{table:carry-chains-conditions} hold.  The number of input configurations producing such a pair of carry chains is
\begin{enumerate}
    \item
  If $n=q$ then  $2^{n-p}2^{j-i}4^{(p-1)-(j-i+1)}$.
    \item
  If $n>q$ then  $3^{n-q+1}2^{q-p}2^{j-i}4^{(p-1)-(j-i+1)}$.
\end{enumerate}

Keeping $i$ and $j$ fixed, we consider all the possible values for $p$ and $q$.  For some inputs, the \ijcarrychain is the leftmost error-contributing chain, and for some inputs there is another error-contributing chain, \pqcarrychain, and the latter is the leftmost such carry chain.  By examining all the cases, we can compute $\nu^{+}(\chain)$ and $\nu^{-}(\chain)$.  Varying the \ijcarrychain over $1 \leq i \leq j \leq n$ we treat all the carry chains and from this we compute the needed $ \exavgroman$, using \eqref{eq:19}.

It is immediately clear how to compute  \eqref{eq:1} in $\Theta(n^{4})$   steps by considering all relevant quadruples  $(i,j,p,q)$.  We sketch next how to do that in $\Theta(n^{2})$ steps.

Fix now an error-contributing \ijcarrychain \chain, where $1 \leq i \leq j \leq n$.  Of course we know the number of input pairs  that generate it.  To compute the needed $\nu^{+}$ and $\nu^{-}$ we need to consider leftmost error-contributing carry chains contained in the interval  $[j+1,j+2,\dotsc,n]$.  It is enough to compute two quantities $\mu^{+}(j+1)$ and $\mu^{-}(j+1)$, which  stand for the number of input pairs in  positions $j+1,j+2,\dotsc,n$ for which the leftmost error-contributing chain is respectively positive or negative.

It is easy and fast to compute $\mu^{+}(j)$ and $\mu^{-}(j)$ iteratively for $j = n, n-1, \dotsc, 1$, because for going from some $j+1$ to $j$ we only need to consider additional leftmost carry chains, those starting with $j$.  In the computations of the various $\mu^{+}(j)$'s and $\mu^{-}(j)$'s each carry chain needs to considered as leftmost once only: a carry chain starting from $j$ increments $\mu^{+}(j)$ if its error is positive and it increments $\mu^{-}(j)$ if its error is negative.  In consequence, the full algorithm operates in the number of steps stated.

We have not discussed how to determine $\errorchain(\chain)$'s.  Knowing them is of particular interest when a designer needs to decide how to allocate the delays to the various components of the pseudo-adder to optimize the desired performance metrics while staying within some resources budget, such as energy.  Although computing $\errorchain(\chain)$'s is not a part of mathematical modeling, we briefly discuss it.  What one presumably needs to do is to examine how the carries of \chain actually propagate in a specific pseudo-adder under consideration.  The actual procedure depends on the design, the testing, and the computational tools available.

An error of the carry chain is by Definition~\ref{def:3} just an error for any one out of a well-defined set of input pairs. So a straightforward way to determine $\errorchain(\chain)$ is to rely on that definition, picking a convenient pair of inputs generating \chain, and examining the behavior of the pseudo-adder for their addition.  As treating a single \chain is likely to involve effort growing with $n$, in our asymptotic analysis characterized by the number of steps, the work required for one step is not constant, but still likely $\mathrm{O}(n)$.

\subsection{Formulation using probabilities}
\label{sec:form-using-prob}

  It is possible to rewrite \eqref{eq:1} using probabilities, as was done in~\oldpaper.  By overloading our notation, we will write $\errorchain(i,j)$ for the error contributed by the \ijcarrychain.  Let
  \begin{equation}
    \label{eq:10}
    p_{ij}^{\sigma} =\frac{1}{2^{2n}} \nu^{\sigma}(\chain)\quad \text{for} \quad\sigma \in \{+1,-1\}.
  \end{equation}
  Then
  \begin{equation}
    \label{eq:41}
      \exavgroman =  \smashoperator[lr]{ \sum_{1 \leq i \leq j \leq n} }  \; ( p_{ij}^{+} - p_{ij}^{-})  \,  \errorchain(i,j).
    \end{equation}

Formulation using probabilities allows convenient handling of input distribution other than the uniform one.  We do not pursue this further here.

\section{Extension to other statistical measures}
\label{sec:extens-other-stat}

\subsection{Mean squared error}
\label{sec:mean-squared-error}

As before, let $\nu(\chain)$ denote the number of input pairs $(a,b)$ generating \chain and let $\nu(\chain_{1},\chain_{2})$ for $\chain_{1} \not = \chain_{2}$ denote the number of input pairs $(a,b)$ generating both $\chain_{1}$ and $\chain_{2}$.

\begin{theorem}
  \label{thm:4}
The mean squared error, the expectation of $\big( \ex (a,b) \big)^{2}$, is
\begin{equation}
  \label{eq:14}
  \smashoperator[r]{
      \sum_{\chain }
}\;
\nu(\chain) \cdot   \big ( \errorchain(\chain)  \big )^{2}
+
  \smashoperator[r]{
      \sum_{ \chain_{1} \not = \chain_{2}}
}
\nu(\chain_{1},\chain_{2}) \cdot   \errorchain(\chain_{1})  \cdot   \errorchain(\chain_{2})
\end{equation}
divided by $2^{2n}$.
\end{theorem}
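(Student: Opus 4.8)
The plan is to exploit the fact that squaring the error eliminates the absolute value, so that---in contrast to Theorem~\ref{thm:2}---none of the dominating-chain machinery of Lemma~\ref{lem:1} is required. The mean squared error is $\frac{1}{2^{2n}}\sum_{a,b}\big(\ex(a,b)\big)^{2}$, and since $\ex(a,b)=\bigabs{s(a,b)-s'(a,b)}$ we have $\big(\ex(a,b)\big)^{2}=\big(s(a,b)-s'(a,b)\big)^{2}$. Invoking the identity $s-s'=\sum_{\chain\in\chains(a,b)}\errorchain(\chain)$ that was already used in~\eqref{eq:31} and in the proof of Lemma~\ref{lem:1}, each summand becomes the square of the sum of the errors of the carry chains generated by $(a,b)$.

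First I would expand this square into its diagonal and off-diagonal contributions:
\[
\Big(\sum_{\chain\in\chains(a,b)}\errorchain(\chain)\Big)^{2}
=\sum_{\chain\in\chains(a,b)}\big(\errorchain(\chain)\big)^{2}
+\sum_{\substack{\chain_{1},\chain_{2}\in\chains(a,b)\\ \chain_{1}\not=\chain_{2}}}\errorchain(\chain_{1})\,\errorchain(\chain_{2}),
\]
where the off-diagonal sum runs over \emph{ordered} pairs, matching the convention of the sum $\sum_{\chain_{1}\not=\chain_{2}}$ in~\eqref{eq:14}.

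Next I would sum over all $(a,b)$ and interchange the order of summation term by term, exactly as in the passage from the first to the last line of~\eqref{eq:8}. For the diagonal part, gathering all input pairs that generate a fixed \chain contributes the factor $\nu(\chain)=\big|\{(a,b)\mid\chain\in\chains(a,b)\}\big|$, which yields $\sum_{\chain}\nu(\chain)\,\big(\errorchain(\chain)\big)^{2}$. For the off-diagonal part, gathering all input pairs that generate both $\chain_{1}$ and $\chain_{2}$ contributes $\nu(\chain_{1},\chain_{2})$ by its definition, yielding $\sum_{\chain_{1}\not=\chain_{2}}\nu(\chain_{1},\chain_{2})\,\errorchain(\chain_{1})\,\errorchain(\chain_{2})$. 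Dividing by $2^{2n}$ then gives~\eqref{eq:14}.

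There is no genuine obstacle in this argument; the one point deserving care is the ordered-versus-unordered bookkeeping of the cross terms, so that the count $\nu(\chain_{1},\chain_{2})$ appears with the correct multiplicity. The expansion of the square naturally produces each unordered pair $\{\chain_{1},\chain_{2}\}$ twice, which is precisely why~\eqref{eq:14} is phrased with the ordered sum $\sum_{\chain_{1}\not=\chain_{2}}$ rather than with a factor of $2$ times a sum over, say, $\chain_{1}$ to the left of $\chain_{2}$.
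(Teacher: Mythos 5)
Your proof is correct and takes essentially the same route as the paper's own proof: square the error (which removes the absolute value), expand $\bigl(\sum_{\chain\in\chains(a,b)}\errorchain(\chain)\bigr)^{2}$ into diagonal and off-diagonal terms, and interchange the order of summation so that the counts $\nu(\chain)$ and $\nu(\chain_{1},\chain_{2})$ appear, then divide by $2^{2n}$. Your explicit remark on the ordered-pair convention for the cross terms is a detail the paper leaves implicit, but it matches the paper's bookkeeping exactly.
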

\begin{proof}
  The sum of squared errors is
\begin{align}
  \label{eq:24}
&  \sum_{a,b}   \Biggl (\,
  \smashoperator[r]{
      \sum_{\chain \in \chain(a,b)}
}\,
\errorchain(\chain)  \Biggr )^{2},\\
&\text{which can be written as}\\
&\sum_{a,b}
  \smashoperator[r]{
      \sum_{\chain \in \chain(a,b)}
}\;
\big ( \errorchain(\chain)  \big )^{2}
+ \sum_{a,b}
  \smashoperator[r]{
      \sum_{\chain_{1}, \chain_{2} \in \chain(a,b) \land \chain_{1} \not = \chain_{2}}
}\,
\errorchain(\chain_{1})  \cdot  \errorchain(\chain_{2}),  \\
&\text{and as}\\
& \smashoperator[r]{
      \sum_{\chain }
}\;
\nu(\chain) \cdot  \big ( \errorchain(\chain)  \big )^{2}
+
  \smashoperator[r]{
      \sum_{ \chain_{1} \not = \chain_{2}}
}\,
\nu(\chain_{1},\chain_{2}) \cdot  \errorchain(\chain_{1})  \cdot  \errorchain(\chain_{2}),
\end{align}
from which the theorem follows.
\end{proof}

\subsection{Maximum absolute error}
\label{sec:maxim-absol-error}

This relies on an algorithmic transformation.  We reduce the computation of the maximum absolute value of the error to a simple and well-studied  graph problem.  Construct  a weighted DAG $G=(V,E)$ to specify how carry chains interact, as follows.
\begin{enumerate}
    \item
  The set of vertices $V$ is the set of all pairs $(i,j)$ for $1 \leq i \leq j \leq n$.  Vertex $(i,j)$ corresponds to \ijcarrychain.
    \item The set of edges $E$ is the set of all pairs $\big ( (i_{1},j_{1}),(i_{2},j_{2}) \big)$ for $j_{1} < i_{2}$.  Such an edge indicates that the \ijcarrychainone and the \ijcarrychaintwo do not overlap and therefore they can both be generated by an input pair.
    \item
  The weight $w\big((i,j)   \big)$ of  vertex $(i,j)$ is the error contributed by the \ijcarrychain.  The weight can be negative, positive, or zero.
\end{enumerate}
The weight of a path in $G$ is the sum of the weights of the vertices in it.  First a simple example.

\begin{example}
  \label{ex:4}
  Consider some $16$-bit adder.  The set of the possible chains is $\{\ijcarrychain \mid 1 \leq i \leq j \leq 16 \}$, with the corresponding set of vertices $\bigl \{ (i,j) \mid 1 \leq i \leq j \leq 16 \bigr\}$.  Since no pair of inputs can generate both the \xycarrychain{4}{8} and the \xycarrychain{7}{10}, we do not have edge $\bigl( (4,8), (7,10) \bigr)$.  Since it is possible to have both the \xycarrychain{4}{8} and the \xycarrychain{9}{10} we do have edge $\bigl( (4,8), (9,10) \bigr)$.

  As some pair of inputs can generate the \xycarrychain{4}{8}, the \xycarrychain{9}{10}, and the \xycarrychain{12}{14}, there is a path $(4,8) \rightarrow (9,10) \rightarrow (12,14)$.  And if a pair of inputs generates exactly those three chains, the error generated by this pair will be the weight of the path.  This completes the example.
\end{example}

By formalizing Example~\ref{ex:4} we formulate
\begin{lemma}
  \label{lem:2}
  There is a one-to-one correspondence between the paths in $G$ and the sets of non-overlapping carry chains.
\end{lemma}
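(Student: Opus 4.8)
The plan is to recognize the edge relation of $G$ as exactly the strict ``lies entirely to the left of'' order on the intervals associated with the vertices, and then to exploit the fact that a family of pairwise non-overlapping carry chains is nothing but a family of pairwise \emph{comparable} intervals under this order, i.e.\ a totally ordered set, which admits a unique left-to-right enumeration.

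First I would record two structural facts about $E$. An edge runs from $(i_{1},j_{1})$ to $(i_{2},j_{2})$ precisely when $j_{1} < i_{2}$; since $i \le j$ for every vertex, this relation is irreflexive, and it is transitive, because $j_{1} < i_{2} \le j_{2} < i_{3}$ gives $j_{1} < i_{3}$. Hence the edge relation is a strict partial order on $V$ and $G$ is its own transitive closure. Moreover, for integer intervals with $i \le j$, the \ijcarrychainone and the \ijcarrychaintwo fail to overlap (are disjoint, in the sense of Lemma~\ref{lem:3}) \IFF $j_{1} < i_{2}$ or $j_{2} < i_{1}$, i.e.\ \IFF the two vertices are comparable in this order. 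Thus ``pairwise non-overlapping'' and ``pairwise comparable'' describe the same families of vertices.

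Next I would define the two maps. Given a set $S$ of pairwise non-overlapping carry chains, the previous observation makes $S$ totally ordered by the left-of relation, so it has a unique increasing enumeration $(i_{1},j_{1}), (i_{2},j_{2}), \dotsc, (i_{m},j_{m})$ with $j_{1} < i_{2} < \dotsm$; consecutive pairs then satisfy the edge condition $j_{k} < i_{k+1}$, so this enumeration is a path in $G$, and I map $S$ to it. Conversely, given a path $(i_{1},j_{1}) \to \dotsm \to (i_{m},j_{m})$, each edge forces $j_{k} < i_{k+1}$, and chaining these inequalities by transitivity gives $j_{k} < i_{\ell}$ for all $k < \ell$; hence the vertices visited are pairwise disjoint intervals, i.e.\ a set of pairwise non-overlapping carry chains, and I map the path to this vertex set.

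Finally I would check these maps are mutually inverse. Starting from $S$, sorting and then forgetting the order returns exactly $S$. Starting from a path, its vertices are \emph{already} listed in strictly increasing left position (every edge strictly increases the starting index), so the unique increasing enumeration of its vertex set reproduces the original sequence, and no vertex can repeat since the order is strict. This yields the claimed bijection, with single-vertex paths corresponding to singleton chain-sets and, under the usual convention, the empty path to the empty set. The one genuinely load-bearing step, the part I expect to need the most care, is the equivalence ``non-overlapping $=$ comparable under the left-of order,'' since it is what turns an unordered set of chains into a uniquely ordered sequence and thereby forces a \emph{function} (not merely a relation) in each direction; once that is in hand the bijection is pure bookkeeping.
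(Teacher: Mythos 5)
Your proposal is correct and takes essentially the same route as the paper's own proof: both arguments rest on chaining the edge inequalities $j_{k} < i_{k+1}$ by transitivity to get pairwise disjointness of the vertices of a path, and on sorting a pairwise non-overlapping family into its unique increasing enumeration to get a path. Your order-theoretic framing (the edge relation as a strict partial order, non-overlap as comparability) and your explicit check that the two maps are mutually inverse are refinements of bookkeeping that the paper leaves implicit, not a different argument.
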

\begin{proof}
    A set $\sigma$ of carry chains can appear in a sum if and only if no two carry chains in it overlap.  We show that there is a bijection between the set of all such $\sigma$'s and the set of all the paths in $G$.

  Let $\pi$ be any path, say $v_{1},v_{2},\dotsc,v_{m}$, written more explicitly as $(i_{1},j_{1}),(i_{2},j_{2}),\dotsc,(i_{m},j_{m})$. As there must be an edge between any two consecutive vertices in the path it follows from the definition of $E$ that $i_{1} \leq j_{1} < i_{2}\leq j_{2} < i_{3}\dotsm j_{m-1} < i_{m} \leq j_{m}$. Therefore, for any $1 \leq k < \ell \leq m$, $j_{k} < i_{\ell}$ holds, and therefore the carry chains corresponding to two distinct vertices in $\pi$ do not overlap.

  Let $\sigma$ be any set of non-overlapping carry chains. Sort it to get a sequence $(i_{1},j_{1}), (i_{2},j_{2}), \dotsc,(i_{m},j_{m})$, such that $i_{1}\leq i_{2} \dotsm \leq i_{m}$. Because no two consecutive carry chains in the sequence overlap, all the $i_{k}$'s are distinct and we can assume that $i_{1} < i_{2} < \dotsm < i_{m}$. Again, for the same reason, for every $k$, $1\leq k <m$, the condition $j_{k} <i_{k+1}$ holds. So $\bigl((i_{k},j_{k}),(i_{k+1},j_{k+1})\bigr) \in E$. Therefore the sequence of the carry chains is in fact a path in $G$.
\end{proof}

Let $w_{\min}$ be the minimum weight of the paths and $w_{\max}$ the maximum weight of the paths.  Then
\begin{theorem}
  \label{thm:1}
  The maximum absolute value of an error is $\max\{-w_{\min},w_{\max}\}$.
\end{theorem}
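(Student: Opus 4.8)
The plan is to identify the set of \emph{signed} errors realized by input pairs with the set of path weights in $G$, and then reduce the statement to the elementary fact that $\max_{x}\abs{x}=\max\{\max_{x}x,\,-\min_{x}x\}$. First I would fix an input pair $(a,b)$ and recall that its total signed error is $E(a,b)=\sum_{\chain\in\chains(a,b)}\errorchain(\chain)$, the decomposition underlying \eqref{eq:31}, and that by Lemma~\ref{lem:3} the chains in $\chains(a,b)$ are pairwise non-overlapping. By Lemma~\ref{lem:2} this set corresponds to a path $\pi$ in $G$ whose vertices are exactly the chains of $\chains(a,b)$; since the weight of a vertex $(i,j)$ is by definition $\errorchain$ of the corresponding chain, and the weight of a path is the sum of its vertex weights, the weight of $\pi$ equals $E(a,b)$. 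Hence every realized signed error is a path weight.

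Conversely, I would invoke the other direction of the bijection in Lemma~\ref{lem:2} together with the realizability statement in its proof (a set of carry chains appears in some sum \IFF no two of its chains overlap): every path $\pi$ yields a set of non-overlapping chains, which is therefore generated by some input pair $(a,b)$, and for that pair $E(a,b)$ equals the weight of $\pi$. Thus the set of realized signed errors is precisely the set of path weights, and consequently $\max_{a,b}\abs{E(a,b)}=\max_{\pi}\abs{w(\pi)}$, where $\pi$ ranges over the paths of $G$.

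It then remains to observe the purely algebraic fact that for any finite collection of reals, $\max\abs{x}=\max\{\max x,\,-\min x\}$, because $\abs{x}=\max\{x,-x\}$ and the outer maximum distributes over the two families. Applying this with $x$ ranging over path weights gives $\max_{\pi}\abs{w(\pi)}=\max\{w_{\max},-w_{\min}\}$, which is the desired expression. One should note in passing that the empty set of chains, realized for instance by $a=b=0$, contributes the zero error; but since at least one carry chain exists, $\max\{w_{\max},-w_{\min}\}\geq 0$ in any case, so this trivial error never dominates and the formula is unaffected.

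I expect the one genuinely load-bearing step, and the one most easily glossed over, to be the \emph{surjectivity} half: verifying that every path weight is actually \emph{attained} as an error by a bona fide input pair, rather than being a mere formal sum of vertex weights. This is exactly what the ``if'' direction of the realizability equivalence in the proof of Lemma~\ref{lem:2} supplies, so the obstacle is one of citing it carefully rather than of real mathematical difficulty; everything else is the bijection of Lemma~\ref{lem:2} and a one-line algebraic identity.
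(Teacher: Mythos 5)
Your proposal is correct and follows essentially the same route as the paper: the paper's proof is a two-sentence appeal to Lemma~\ref{lem:2} (path weights coincide with realizable errors, hence $w_{\min}$ and $w_{\max}$ are the minimum and maximum errors, and one takes $\max\{-w_{\min},w_{\max}\}$ to account for large negative errors). You have merely made explicit the steps the paper leaves implicit — the two directions of the error/path-weight correspondence, the realizability (surjectivity) half, the $\abs{x}=\max\{x,-x\}$ identity, and the empty-chain edge case — which is a faithful elaboration rather than a different argument.
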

\begin{proof}
 By Lemma~\ref{lem:2}, $w_{\min}$ and $w_{\max}$ are respectively equal to the minimum and the maximum error. Note that we needed to consider $-w_{\min}$ as there may be negative errors with large absolute values.
\end{proof}

Both $w_{\min}$ and $w_{\max}$ can be computed fast by adapting standard all-pairs shortest path algorithms \cite{Cormen 2009}.  Note also that even though $G$ has $\Theta(n^{2})$ vertices, any path in it is of length $\mathrm{O}(n)$ only, which makes computations faster than for general DAGs.

\begin{figure}
  \centering
  \includegraphics[scale=0.85]{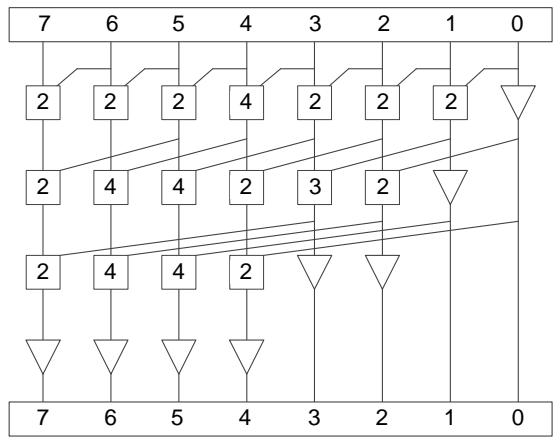}
  \caption{An 8-bit Kogge-Stone adder.  The delays of the \units are written inside the corresponding nodes.}
  \label{fig:Kogge-Stone-adder}
\end{figure}

\begin{figure*}
\centering
\begin{tabular}{r@{\hskip 5em}rrrrrrrrr@{\hskip 4em}r@{\hskip 2em}rr}
\toprule
&\multicolumn{8}{c}{\hspace{1em}\textbf{Position}}\\
&&7&6&5&4&3&2&1&0\\
\noalign{\vskip 1ex}
\multicolumn{1}{l}{$\boldsymbol{a}\hspace{2em}\longrightarrow$}&$0$&$0$&$1$&$0$&$1$&$0$&$1$&$1$&$0$\\
\multicolumn{1}{l}{$\boldsymbol{b}\hspace{2em}\longrightarrow$}&$0$&$0$&$0$&$1$&$1$&$1$&$0$&$1$&$1$\\
\noalign{\vskip 1ex}
\midrule
\noalign{\vskip 1ex}
\textbf{Time}&&&&&&&&&&$s'$&$s-s'$&\\
\noalign{\vskip 1ex}
0&0&\cellcolor{orange}0&\cellcolor{orange}0&\cellcolor{orange}0&\cellcolor{yellow}0&\cellcolor{yellow}0&\cellcolor{yellow}0&0&0&0&+145&\\
1&0&\cellcolor{orange}0&\cellcolor{orange}1&\cellcolor{orange}1&\cellcolor{yellow}0&\cellcolor{yellow}1&\cellcolor{yellow}1&0&1&109&$+36$&\\
2&0&\cellcolor{orange}0&\cellcolor{orange}1&\cellcolor{orange}1&\cellcolor{yellow}0&\cellcolor{yellow}1&\cellcolor{yellow}1&0&1&109&$+36$&\\
3&0&\cellcolor{orange}0&\cellcolor{orange}1&\cellcolor{orange}1&\cellcolor{yellow}0&\cellcolor{yellow}1&\cellcolor{yellow}1&0&1&109&$+36$&\\
4&0&\cellcolor{orange}0&\cellcolor{orange}1&\cellcolor{orange}1&\cellcolor{yellow}0&\cellcolor{yellow}1&\cellcolor{yellow}0&0&1&105&$+40$&\\
5&0&\cellcolor{orange}0&\cellcolor{orange}1&\cellcolor{orange}1&\cellcolor{yellow}0&\cellcolor{yellow}0&\cellcolor{yellow}0&0&1&97&$+48$&\\
6&0&\cellcolor{orange}0&\cellcolor{orange}1&\cellcolor{orange}1&\cellcolor{yellow}0&\cellcolor{yellow}0&\cellcolor{yellow}0&0&1&97&$+48$&\\
7&0&\cellcolor{orange}1&\cellcolor{orange}1&\cellcolor{orange}1&\cellcolor{yellow}0&\cellcolor{yellow}0&\cellcolor{yellow}0&0&1&225&$-80$&\\
8&0&\cellcolor{orange}1&\cellcolor{orange}1&\cellcolor{orange}1&1&0&0&0&1&241&$-96$&\\
9&0&\cellcolor{orange}1&\cellcolor{orange}1&\cellcolor{orange}1&1&0&0&0&1&241&$-96$&\\
10&0&1&0&0&1&0&0&0&1&145&0&\\
\noalign{\vskip 1ex}
\bottomrule
\end{tabular}
\caption{An example of addition in an overclocked Kogge-Stone adder.  Carry chains are highlighted at the instances of time in which they contribute errors.}
\label{table:KSA-errors}
\end{figure*}

\section{Example: A Kogge-Stone-based pseudo-adder}
\label{sec:example:-kogge-stone}

We will now look at a small pseudo-adder based on the KSA and show for it how the signs of the carry chains interact in computing the absolute value of the error produced by sets of carry chains for a pair of inputs, elaborating on Example~\ref{ex:1}.  The KSA is parallel-prefix, carry-lookahead adder.  Ignoring for now the numbers in the vertices, an example of an  $8$-bit KSA is depicted Fig.~\ref{fig:Kogge-Stone-adder}, which we adapt from Weste and Harris~\cite{harris:vlsi2011}, where also a more detailed description of such figures is given.

For us what is important is that the KSA operates with some parallelism.  Therefore, given some allocation of propagation delays, it may happen that both $c_{k-1}=1$ and $c_{k}=1$ but $c_{k}$ propagated to its final destination faster than $c_{k-1}$ did to its final destination.  We note here that irrespective of the allocation of propagation delays in the RCA, that condition can never happen in its simple architecture, and that is why its mathematical modeling could be so much simpler.

  The numbers in the vertices representing modules in Fig.~\ref{fig:Kogge-Stone-adder} are the delays introduced in those modules.  In Fig.~\ref{table:KSA-errors}, we flesh out Example~\ref{ex:1} for this adder.  We label the two carry chains, $C_{1}$ for the \xycarrychain{2}{4} and $C_{2}$ for the \xycarrychain{5}{7}.   The correct sum is obtained at time $t=10$.  We next consider the case where the output is read at the time $T=7$.

Based on the topology of the KSA  and on the delays in Fig.~\ref{fig:Kogge-Stone-adder}, we compute the values of $\boldsymbol{s'}$ at various instances of time.  Note that sometimes the total error is positive and sometimes negative.  At time $T$, both of the carry chains contribute errors, and we compute the total absolute error produced at that time.
\begin{enumerate}
  \item
$C_{1}$ is an error-contributing carry chain but not the leftmost one, and it contributes an error of $+16$.
  \item
$C_{2}$ is the leftmost error-contributing carry chain.  That is, it is $\chainleft(a,b)$, and it contributes an error of $-96$, so
  \begin{equation}
    \label{eq:5}
\signhat(a,b)=      \sgn \Big(\errorchain \big(\chainleft(a,b) \big) \Big)=-1.
\end{equation}
\end{enumerate}
Therefore, at time $T=7$,\, $\abs{s-s'} = (-1)(-96)+(-1)(+16)=+80$.

Despite its correct hardware, the adder was incorrect in this case because it was overclocked.  A simple modification of this example produced by changing the topology of the adder, would produce a pseudo-adder in which the outputs for some inputs never become correct.

\section{Conclusions}

The method we have presented can be applied to a wide class of circuits implementing approximate adders.  As shown in~\oldpaper for the restricted case of ripple-carry adders, it is now possible to evaluate alternative schemes of allocation of voltages to the adder's components to optimize the quality of its outputs under some statistical measure.  Returning to Fig.~\ref{fig:Kogge-Stone-adder}, the delays in the modules are a function of the voltages applied, which in turn determine the energy consumption.  A key component of an optimization procedure is the ability to generate a good estimate of errors, and our method provides the mathematical means enabling that.

\section*{Acknowledgment}

Some of the results presented here appeared first in a preliminary form in~\cite{KM201303}.




%
\bibliographystyle{IEEEtran}
\bibliography{references}

\end{document}